\def\H{{\cal H}}
\def\R{\mathbb{R}}
\def\H2{H^2(\R^N)}
\def\L2{L^2(\R^N)}
\def\to{\rightarrow}
\def\cd{\!\cdot\!}
\numberwithin{equation}{section}
\newtheorem{thm}{Theorem}[section]
\newtheorem{lem}[thm]{Lemma}
\newtheorem{prop}[thm]{Proposition}
\newtheorem{cor}[thm]{Corollary}
\newtheorem{remark}{Remark}[section]
\newcommand{\Extend}[5]{\ext@arrow0099{\arrowfill@#1#2#3}{#4}{#5}}
\begin{document}

\setcounter{page}{1}

\title[Blow up for NLS]{On Blow-up criterion for the Nonlinear Schr\"{o}dinger Equation}
\author{Dapeng Du}
\address{School of Mathematics and Statistics, Northeast Normal University, \ Changchun, \ P.R.China, \ 130024,} \email{dudp954@nenu.edu.cn}
\thanks{The first author was partially supported by the Chinese NSF (No. 11001043) and the China Postdoctoral Science Foundation (No. 20090460074).}

\author{Yifei Wu}
\address{School of  Mathematical Sciences, Beijing Normal University, Laboratory of Mathematics and Complex Systems,
Ministry of Education, Beijing, P.R.China, 100875}
\email{yerfmath@gmail.com}
\thanks{The second author was partially supported
by the Chinese NSF (No. 11101042) and the Chinese Postdoctoral
Science Foundation (No. 2012T50068).}

\author{Kaijun Zhang}
\address{School of Mathematics and Statistics, Northeast Normal University, \ Changchun, \ P.R.China, \ 130024,} \email{zhangkj201@nenu.edu.cn}

\thanks{The third author was partially supported by the Chinese NSF (No. 11071034) and the Fundamental Research Funds for the Central Universities (No. 111065201).}



\keywords{Nonlinear Schr\"{o}dinger equation,  blow-up}

\maketitle

\begin{abstract}\noindent
The blowup is studied for the nonlinear Schr\"{o}dinger equation
$iu_{t}+\Delta u+ |u|^{p-1}u=0$ with $p$ is odd and $p\ge 1+\frac 4{N-2}$ (the
energy-critical or energy-supercritical case). It is shown that the
solution with negative energy $E(u_0)<0$ blows up in finite or
infinite time. A new proof is also presented for the previous result
in \cite{HoRo2}, in which  a similar result but more general in a
case of energy-subcritical was shown.
\end{abstract}

\section{Introduction}
The  Schr\"{o}dinger equation is the fundamental equation in quantum
mechanics. Its general form is
\begin{equation}\label{EQS-NLS-General}
    iu_{t}+\Delta u-Vu=0,
\end{equation}
where $V$ denote the potential and $|u|^2$ is the probability density
that the particle appears at the point $(x,t)$. The
solution $u$ is called wave function. In this paper, we study
the following well-known
focusing nonlinear
Schr\"{o}dinger equation
\begin{equation}\label{EQS-NLS}
   \left\{ \aligned
    &iu_{t}+\Delta u+ |u|^{p-1}u=0,\quad\;  (x,t)\in \R^N\times\R,\\
    &u(x,0)=u_0(x), \;\qquad\quad  x\in \R^N.
   \endaligned
  \right.
\end{equation}
This equation received a great deal of attention from
mathematicians, in particular because of its applications to
nonlinear optics, see for examples, Berg\'{e} \cite{Be-phy-98},
Sulem and Sulem \cite{Susu-Phy-99}. For \eqref{EQS-NLS}, the
potential $V=-|u|^{p-1}$. Notice that $V$ depends on the wave
function $u$. This give the term \emph{nonlinear}. The potential $V$
becomes negative very large when the probability density $|u|^2$ is very
large. This property brings another term \emph{focusing}. The
equation \eqref{EQS-NLS} has very important scaling invariant
symmetry:
\begin{equation}\label{eqs:scaling}
u_\lambda(x,t) = \lambda^{\frac2{p-1}} u(\lambda x, \lambda^2 t),
\end{equation}
in the sense that both the equation and the $\dot{H}^{s_c}$-norm are
invariant under the scaling transformation, where
\begin{equation}\label{eqs.1:sc}
   s_c=\frac N2-\frac2{p-1}.
\end{equation}
This gives the notation \emph{critical regularity} , the lowest
regularity assumption that the equation \eqref{EQS-NLS} is
well-posed. If the critical regularity of the problem
\eqref{EQS-NLS} is higher/lower than $s$, we call the problem
$\dot{H}^s$-\emph{subcritical/supercritical}. In view of this, the
Schr\"{o}dinger equation \eqref{EQS-NLS} is called
energy-subcritical when $p<1+\frac{4}{N-2}$, which is equivalent to
$s_c<1$ (in particular, it is called the mass-critical when
$p=1+\frac{4}{N}$ or $s_c=0$); it is called energy-critical when
$p=1+\frac{4}{N-2}$, which is equivalent to $s_c=1$; and it is
called energy-supercritical when $p>1+\frac{4}{N-2}$, which is
equivalent to $s_c>1$.

The solution of equation \eqref{EQS-NLS} obeys the mass, momentum and energy
conservation laws, which read as
\begin{equation}\label{eqs:energy-mass}
   \aligned
M(u(t))&\triangleq\int |u(x,t)|^2\,dx=M(u_0),\\
P(u(t))&\triangleq\textrm{Im}\int \overline{u(x,t)}\nabla u(x,t)\,dx=P(u_0),\\
E(u(t)) &\triangleq \int |\nabla u(x,t)|^2 - \frac2{p+1}\int
|u(x,t)|^{p+1} \,dx = E(u_0).
   \endaligned
\end{equation}

The local well-posedness for the initial data problem
\eqref{EQS-NLS} with $u_0\in H^1(\R^N)$ was studied in Cazenave and
Weissler \cite{CaWe-NA-93} in the energy-subcritical/critical cases.
It was also shown in \cite{KiVi10} that the problem \eqref{EQS-NLS}
in the energy-supercritical case is locally well-posed under some
assumptions on the dimension $N$ and the power $p$.
A natural question is whether the local solution exists globally. In
the mass-subcritical case, it follows easily from the
Gagliardo-Nirenberg inequality that the global solution exists.
From the global theory for small
data, we know that if the Sobolev norm ($H^{s_c}$-norm) of the
initial data is sufficiently small, then there exists a unique
global solution to (1.1). However, for large
 initial data, under suitable smoothness and
decay assumptions, the virial identity guarantees that finite
time blowup may occur. In particular, Glassey \cite{Glassey77}
proved that if initial data satisfies $xu_0\in L^2(\R^N)$ with
negative energy, then the corresponding solution blows up in finite
time.

After this result, many attempts  have been made to remove/relax the
finite variance assumption. Especially, in the energy-subcritical
case, Ogawa and Tsutsumi \cite{OgTs91} removed the finite variance
assumption in the radial symmetry case. The radiality condition was
relaxed to some nonisotropic ones by Martel \cite{Ma97}. In the 1D
mass-critical case ($p=5$), Ogawa and Tsutsumi \cite{OgTs92}
completely removed the finite variance assumption. As a remark in
the famous paper \cite{MeRa}, Merle and Raphael showed that in the
mass-critical case, if the mass of the initial data is close to the
mass of the ground state, then the solution with negative energy
blows up in finite time. The similar result was obtained by Raphael
and Szeftel \cite{RaSz09} for the radial quintic nonlinear
Schr\"{o}dinger equation in any dimension:
$$
iu_t+\Delta u+|u|^4u=0, (x,t)\in \R^N\times \R.
$$
%

Besides the finite time blow-up criterion, the other interesting
topic is to see what happens if one only assumes that the initial data
has negative energy. In \cite{GlMe-95}, Glangetas and Merle proved that in the
mass-critical/mass-supercritical, energy subcritical cases with $E(u_0)<0$,  the solution blows up in
finite or infinite time, in the sense of
\begin{equation*}
\sup\limits_{t\in (-T_-(u_0),T_+(u_0))}\|u(t)\|_{H^1}=+\infty,
\end{equation*}
where $(-T_-(u_0),T_+(u_0))$ is the maximal lifespan of the solution
with the initial data $u_0$. The method is a geometrical approach.
See also Nawa \cite{Na99} in the
mass-critical case.
In particular, when $N=3, p=3$, a similar but more general result was
established by Holmer and Roudenko \cite{HoRo2} using the
concentration-compactness argument, see also \cite{CaGu-CM-11, Guoqing} for some related results by using the argument in \cite{HoRo2}. However, it's not clear how to
generalize the argument to the energy-critical/energy-supercritical
cases. In this paper, we give a new argument to prove it. Our
argument is suitable for the energy-critical/energy-supercritical
cases, and gives a similar result about it.

Here comes our theorem, which is about energy-critcal/energy-supercritical cases. For the sake
of simplicity, we only focus our attention on the odd values of the
power $p$.
\begin{thm}\label{main-thm} Suppose that $p$ is odd, $p\ge 1+4/(N-2)$, $N\ge 3$, and
$s>s_c$. Let the initial data $u_0\in H^s(\R^N)$ with $E(u_0)<0$,
and let $u$ be the corresponding solution with the lifetime
$[0,T_{max})$. Then one of the following two statements holds true,
\begin{itemize}
  \item $T_{max}<\infty$, that is, the solution blows up in finite
  time.
  Moreover,
  $$
  \lim\limits_{t\uparrow T_{max}}\|u(t)\|_{H^s}=\infty.
  $$
  \item $T_{max}=\infty$, and there exists a time sequence $\{t_n\}$ such
  that $t_n\to \infty$, and for any $ q>p+1$,
  $$
  \lim\limits_{t_n\uparrow \infty}\|u(t_n)\|_{L^q}=\infty.
  $$
\end{itemize}
A similar result remains true for negative time.
\end{thm}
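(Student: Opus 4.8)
The plan is to dispose of the two alternatives by entirely different mechanisms. For the finite-time alternative, I would use the local Cauchy theory for \eqref{EQS-NLS}: since $p$ is odd the nonlinearity is a polynomial in $(u,\bar u)$, and since $s>s_c$ the problem is $H^s$-subcritical, so the contraction-mapping scheme produces a local existence time depending only on $\|u_0\|_{H^s}$. Hence if $T_{max}<\infty$ while $\liminf_{t\uparrow T_{max}}\|u(t)\|_{H^s}=:L<\infty$, one may take $t_n\uparrow T_{max}$ with $\|u(t_n)\|_{H^s}\le L+1$ and solve with data $u(t_n)$ on a common interval of length $\delta=\delta(L+1)>0$; for large $n$ this contradicts the maximality of $T_{max}$, so in fact $\lim_{t\uparrow T_{max}}\|u(t)\|_{H^s}=\infty$, which is the first bullet. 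The statement for negative times follows from the time-reversal symmetry $u(t,x)\mapsto\overline{u(-t,x)}$, so from now on I take $T_{max}=\infty$.

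In the global case I would first reduce to an assertion about $\|\nabla u(t)\|_{L^2}$. Energy conservation and $E(u_0)<0$ give, for all $t$,
\begin{equation}\label{prop:id}
\|u(t)\|_{L^{p+1}}^{p+1}=\tfrac{p+1}{2}\big(\|\nabla u(t)\|_{L^2}^{2}-E(u_0)\big)=\tfrac{p+1}{2}\big(\|\nabla u(t)\|_{L^2}^{2}+|E(u_0)|\big),
\end{equation}
so $\|u(t)\|_{L^{p+1}}$ and $\|\nabla u(t)\|_{L^2}$ grow (or stay bounded) together; and interpolating with the conserved mass, $\|u(t)\|_{L^{p+1}}\le M(u_0)^{\theta/2}\|u(t)\|_{L^{q}}^{1-\theta}$ for any $q>p+1$, where $\theta\in(0,1)$ solves $\tfrac1{p+1}=\tfrac\theta2+\tfrac{1-\theta}{q}$. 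It therefore suffices to prove that
\begin{equation}\label{prop:target}
\limsup_{t\to\infty}\|\nabla u(t)\|_{L^2}=\infty ,
\end{equation}
for then, taking $t_n\to\infty$ realizing this supremum, \eqref{prop:id} yields $\|u(t_n)\|_{L^{p+1}}\to\infty$ and the interpolation inequality promotes this to $\|u(t_n)\|_{L^{q}}\to\infty$ for every $q>p+1$.

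To prove \eqref{prop:target} I argue by contradiction, assuming $\sup_{t\ge0}\|\nabla u(t)\|_{L^2}=:B<\infty$, so by \eqref{prop:id} also $\sup_t\|u(t)\|_{L^{p+1}}<\infty$ and hence $\sup_t\|u(t)\|_{L^r}<\infty$ for $2\le r\le p+1$; by a Galilean transformation (which keeps $E(u_0)<0$, only making it smaller) I may also assume the momentum $P(u_0)=0$, which prevents the solution from drifting to spatial infinity at a linear rate. Fix a radial $\psi\in C^\infty(\R^N)$ with $\psi(x)=|x|^2$ for $|x|\le1$, $\psi$ bounded, $\psi''\le2$, $\psi'(r)/r\le2$, and $\Delta\psi\ge0$ (possible for $N\ge3$: take $\psi$ radially harmonic for $|x|\ge1$), set $\psi_R(x)=R^2\psi(x/R)$ and $V_R(t)=\int\psi_R(x)|u(t,x)|^2\,dx$. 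The virial computation (legitimate since $p$ is odd and $u$ is sufficiently regular) gives $0\le V_R(t)\lesssim R^2M(u_0)$, $|V_R'(t)|\lesssim R\,M(u_0)^{1/2}B$, and---using $\psi''\le2$ and $\psi'(r)/r\le2$ to absorb the gradient error with a good sign, and $\Delta\psi\ge0$ to discard the far nonlinear term---
\begin{equation}\label{prop:virial}
V_R''(t)\le 8E(u_0)+\frac{4N(p-1)}{p+1}\int_{|x|\ge R/2}|u(t,x)|^{p+1}\,dx+C R^{-2}M(u_0).
\end{equation}

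The crux---and the step I expect to be the main obstacle---is to show that the nonlinear tail in \eqref{prop:virial} is negligible. If $u$ is radial this is immediate: the Strauss inequality $\|v\|_{L^\infty(|x|\ge R)}\lesssim R^{-(N-1)/2}\|v\|_{L^2}^{1/2}\|\nabla v\|_{L^2}^{1/2}$ bounds the tail by $CR^{-(N-1)(p-1)/2}$ (times fixed powers of $B$ and $M(u_0)$), so \eqref{prop:virial} gives $V_R''\le 4E(u_0)<0$ for $R$ large, and integrating twice contradicts $V_R\ge0$; via the above reduction this recovers the Ogawa--Tsutsumi-type results in the radial case. In general, and above all in the energy-supercritical regime where $H^1$ does not control $L^{p+1}$, mass may escape to infinity and the tail need not be uniformly small; here I would use a concentration argument. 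Along a sequence $t_n$ realizing $\ell:=\limsup_{t\to\infty}\|u(t)\|_{L^{p+1}}^{p+1}$---a number which is \emph{strictly positive} by \eqref{prop:id} and finite by the assumed bound---the $L^{p+1}$-mass of $u(t_n)$ cannot vanish (vanishing would force $\|u(t_n)\|_{L^{p+1}}\to0$, contradicting $\ell>0$) nor, because $P(u_0)=0$, run off to infinity faster than the virial argument can follow; extracting from $\{u(t_n)\}$ a nontrivial profile one obtains $y_n\in\R^N$ with $\int_{|x-y_n|\ge R}|u(t_n)|^{p+1}\to0$ as $R\to\infty$ uniformly in $n$. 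Running the virial with $\psi_R$ recentred at $y_n$ over a time interval about $t_n$ long enough for the $8E(u_0)$ term to dominate, and propagating the smallness of the recentred tail over that interval by means of the local theory and the $H^1$-bound, forces the recentred $V_R$ to become negative---contradicting $V_R\ge0$ and proving \eqref{prop:target}. I also expect that, once this machinery is in place, it delivers the new proof of the energy-subcritical result of \cite{HoRo2} announced in the abstract.
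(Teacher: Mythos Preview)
Your treatment of the finite-time alternative and the reduction (via energy conservation and $L^2$--$L^q$ interpolation) from $\|u(t_n)\|_{L^q}\to\infty$ to $\limsup_t\|\nabla u(t)\|_{L^2}=\infty$ matches the paper. The genuine gap is in the step you flag as ``the main obstacle'': controlling the nonlinear tail $\int_{|x|\ge R}|u(t)|^{p+1}\,dx$ in the localized virial. Your proposed mechanism is a concentration/profile argument along a sequence $t_n$, followed by ``propagating the smallness of the recentred tail over that interval by means of the local theory and the $H^1$-bound.'' In the energy-critical and especially energy-supercritical regime this does not work: the local existence time depends on $\|u\|_{H^s}$ with $s>s_c\ge 1$, not on $\|u\|_{H^1}$, and no uniform $H^s$-bound is available under the contradiction hypothesis. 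Moreover, to drive $V_R$ negative you must integrate over a time interval of length comparable to $R$, and your profile extraction only yields tail smallness \emph{at} $t_n$, not on such an interval. This is precisely the obstruction the paper points out when remarking that the Holmer--Roudenko concentration-compactness argument does not obviously extend beyond the energy-subcritical case.

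The paper's argument avoids concentration-compactness entirely. Under the contradiction hypothesis $\sup_t\|u(t)\|_{L^q}<\infty$ (hence $\sup_t\|\nabla u(t)\|_{L^2}=\overline{C_0}<\infty$), it first runs a \emph{separate} virial computation with a cutoff supported \emph{outside} the ball $\{|x|\le R/2\}$ to obtain an almost-finite-speed-of-propagation estimate: $\int_{|x|\ge R}|u(t)|^2\,dx\le \eta_0+o_R(1)$ for all $t\le \eta_0 R/(4m_0\overline{C_0})$. The nonlinear tail in the second (standard) localized virial is then bounded by interpolating $L^{p+1}(|x|>R)$ between the \emph{globally bounded} $L^q(|x|>R)$ and the \emph{small} $L^2(|x|>R)$, yielding $I''(t)\le 8Q(u(t))+\widetilde C\,\eta_0^{\theta_q}+o_R(1)$ on the whole interval $[0,\eta_0 R/(4m_0\overline{C_0})]$. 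Since $Q(u(t))\le E(u_0)<0$, choosing $\eta_0$ small and $R$ large makes $I''\le 4E(u_0)$; together with $I(0)=o_R(1)R^2$, $I'(0)=o_R(1)R$ (from $u_0\in L^2$), integrating to $T\sim R$ forces $I(T)<0$, a contradiction. No Galilean normalization, no profile decomposition, and no propagation via local theory are needed.
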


\begin{remark}
Roughly speaking, Case 1 refers to the finite time blow-up, Case 2
refers to the infinite time blow-up (one may certainly substitute
$L^q$-norm to $H^s$-norm in this case, by Sobolev's embedding). At
this stage, it is not clear whether Case 2 could be ruled out, or it
would indeed happen.
\end{remark}


Thanks to the Galilei transformation, we may extend the negative energy condition to the following.
\begin{cor}\label{cor:main-thm}
Theorem \ref{main-thm} still holds true when the condition
$E(u_0)<0$ is reduced to
\begin{equation}\label{eqs1.1:Q-condition'}
E(u_0)<P(u_0)^2\big/M(u_0).
\end{equation}
\end{cor}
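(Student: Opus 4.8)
The plan is to deduce Corollary~\ref{cor:main-thm} from Theorem~\ref{main-thm} by exploiting the Galilei invariance of equation \eqref{EQS-NLS}. Given the solution $u$ on $[0,T_{max})$ and a parameter $v\in\R^N$ to be chosen later, I set
\begin{equation*}
w(x,t)=e^{i\left(\frac{v}{2}\cdot x-\frac{|v|^2}{4}t\right)}u(x-vt,t),
\end{equation*}
and a direct computation shows that $w$ solves \eqref{EQS-NLS} with initial datum $w_0(x)=e^{i\frac{v}{2}\cdot x}u_0(x)\in H^s(\R^N)$. At each fixed time the map $u(\cdot,t)\mapsto w(\cdot,t)$ is the composition of a spatial translation, an $x$-independent unimodular factor, and multiplication by the plane wave $e^{i\frac{v}{2}\cdot x}$; since the spatial translation is an isometry of $H^s(\R^N)$ and conjugation by $e^{i\frac{v}{2}\cdot x}$ is a bounded, boundedly invertible operator of $H^s(\R^N)$ (it shifts frequencies by $\tfrac v2$, and $(1+|\xi|^2)^s\sim_v(1+|\xi+\tfrac v2|^2)^s$ by Peetre's inequality), the maps $u\leftrightarrow w$ send $H^s$-solutions to $H^s$-solutions, so $u$ and $w$ have the same maximal lifespan $[0,T_{max})$: an extension of either one past $T_{max}$ would immediately produce one of the other.

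First I would record how the conservation laws transform under this symmetry. Since $|w_0|=|u_0|$ pointwise the potential energy is unchanged, and $\nabla w_0=e^{i\frac{v}{2}\cdot x}\bigl(i\tfrac v2 u_0+\nabla u_0\bigr)$, so, expanding the squares and integrating,
\begin{equation*}
M(w_0)=M(u_0),\quad P(w_0)=P(u_0)+\tfrac v2 M(u_0),\quad E(w_0)=E(u_0)+v\cdot P(u_0)+\tfrac{|v|^2}{4}M(u_0).
\end{equation*}
Choosing $v=-2P(u_0)/M(u_0)$ kills the new momentum, $P(w_0)=0$, and leaves
\begin{equation*}
E(w_0)=E(u_0)-\frac{|P(u_0)|^2}{M(u_0)},
\end{equation*}
which is strictly negative exactly when hypothesis \eqref{eqs1.1:Q-condition'} holds (there $P(u_0)^2$ denotes $|P(u_0)|^2$).

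It then remains to apply Theorem~\ref{main-thm} to $w$ and transport the conclusion back to $u$. If $T_{max}<\infty$ then $\|w(t)\|_{H^s}\to\infty$ as $t\uparrow T_{max}$; since $\|w(t)\|_{H^s}\le C(v)\|u(t)\|_{H^s}$ by the isomorphism properties recalled above, it follows that $\|u(t)\|_{H^s}\to\infty$ as $t\uparrow T_{max}$. If instead $T_{max}=\infty$ then for every $q>p+1$ there is a sequence $t_n\to\infty$ with $\|w(t_n)\|_{L^q}\to\infty$; as the plane wave is unimodular and spatial translation is an $L^q$-isometry, $\|u(t_n)\|_{L^q}=\|w(t_n)\|_{L^q}\to\infty$. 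The statement for negative time is obtained in the same fashion, applying the negative-time part of Theorem~\ref{main-thm} to $w$ (whose negative lifespan again coincides with that of $u$). The only point requiring a little care in this argument is the $H^s$-boundedness of the plane-wave conjugation used above --- it is \emph{not} a pointwise multiplier of $H^s$ for $s>0$, yet conjugation by it is an $H^s$-isomorphism since it merely translates the Fourier support --- and apart from this bookkeeping the proof is entirely soft, so I do not anticipate any genuine obstacle.
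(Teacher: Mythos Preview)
Your proof is correct and follows exactly the paper's approach: apply the Galilei transformation with parameter chosen to annihilate the momentum, so that the transformed datum has negative energy, and then invoke Theorem~\ref{main-thm}. Your write-up is in fact more thorough than the paper's, which simply records the transformation and the identity $E(u_0)-P(u_0)^2/M(u_0)=E(\tilde u_0)$ without spelling out the $H^s$-isomorphism property of plane-wave conjugation or the transfer of the $H^s$ and $L^q$ blow-up conclusions back to $u$.
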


Besides the energy-critical and energy-supercritical cases, our
method also could be used in the energy-subcritical case, that is,
$p<1+4/(N-2)$. Let $Q$ be the ground state of the nonlinear elliptic
equation
\begin{equation}\label{eq:Q}
-Q + \Delta Q + |Q|^{p-1}Q=0, \qquad Q=Q(x), \qquad x\in \R^N.
\end{equation}
As mentioned above, Holmer, Roudenko \cite{HoRo2} and Guo \cite{Guoqing}  proved the
following result.
%
\begin{thm}\label{thm:main_subciritcal}
Let $1+\frac 4N<p< 1+\frac 4{N-2}$, $u$ be the solution of \eqref{EQS-NLS} with the lifetime $[0,T_{max})$, and let the initial data $u_0\in H^1(\R^N)$. Then if
\begin{equation}\label{sub-condition}
   \aligned
M(u_0)^{1-s_c}E(u_0)^{s_c}<M(Q)^{1-s_c}E(Q)^{s_c}, \quad
\|u_0\|_{L^2}^{1-s_c}\|\nabla
u_0\|_{L^2}^{s_c}>\|Q\|_{L^2}^{1-s_c}\|\nabla Q\|_{L^2}^{s_c},
   \endaligned
\end{equation}
then  one of the following
two statements holds true,
\begin{itemize}
  \item $T_{max}<\infty$, and
  $$
  \lim\limits_{t\uparrow T_{max}}\|\nabla u(t)\|_{L^2}=\infty.
  $$
  \item $T_{max}=\infty$, and there exists a time sequence $\{t_n\}$ such
  that $t_n\to \infty$, and
  $$
  \lim\limits_{t_n\uparrow \infty}\|\nabla u(t_n)\|_{L^2}=\infty.
  $$
\end{itemize}
\end{thm}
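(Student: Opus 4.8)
\emph{Proof proposal.} The plan is to argue by contradiction, combining a variational coercivity estimate based on the sharp Gagliardo--Nirenberg inequality with a localized virial identity. First reduce the claim: if $T_{\max}<\infty$, the $H^1$ local theory of \cite{CaWe-NA-93} (blow-up alternative) together with $M(u(t))=M(u_0)$ forces $\|\nabla u(t)\|_{L^2}\to\infty$, the first conclusion; and if $T_{\max}=\infty$ with $\sup_{t\ge0}\|\nabla u(t)\|_{L^2}=\infty$ we may pick $t_n\to\infty$ along which $\|\nabla u(t_n)\|_{L^2}\to\infty$ (using that $t\mapsto\|\nabla u(t)\|_{L^2}$ is bounded on each $[0,T]$), the second conclusion. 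So it suffices to rule out the standing assumption $T_{\max}=\infty$, $B:=\sup_{t\ge0}\|\nabla u(t)\|_{L^2}<\infty$; then $u$ is bounded in $H^1$ and, by Gagliardo--Nirenberg and \eqref{eqs:energy-mass}, $\sup_t\|u(t)\|_{L^{p+1}}<\infty$ as well.

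\emph{Step 1: variational coercivity.} Feeding the sharp Gagliardo--Nirenberg inequality (whose optimizers are the rescalings and translates of the ground state $Q$ of \eqref{eq:Q}) into $E(u(t))$, using the Pohozaev relation $\|\nabla Q\|_{L^2}^2=\tfrac{N(p-1)}{2(p+1)}\|Q\|_{L^{p+1}}^{p+1}$, and tracking the scaling-invariant combinations in \eqref{sub-condition} together with conservation of mass and energy, one produces a function $g$ with a unique interior maximum $M(Q)^{1-s_c}E(Q)^{s_c}$, attained at $M(Q)^{1-s_c}\|\nabla Q\|_{L^2}^{2s_c}$, such that $M(u_0)^{1-s_c}E(u_0)^{s_c}\ge g\big(M(u_0)^{1-s_c}\|\nabla u(t)\|_{L^2}^{2s_c}\big)$ for all $t$. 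Since by \eqref{sub-condition} the left side lies strictly below $\max g$ and the argument at $t=0$ lies on the super-threshold branch, continuity of $t\mapsto\|\nabla u(t)\|_{L^2}$ keeps it there, giving constants $c_0,\delta>0$ with $\|\nabla u(t)\|_{L^2}\ge c_0$ and
\[
W(t):=8\|\nabla u(t)\|_{L^2}^2-\tfrac{4N(p-1)}{p+1}\|u(t)\|_{L^{p+1}}^{p+1}\le-\delta\qquad(t\ge0).
\]
Indeed, using $\|u\|_{L^{p+1}}^{p+1}=\tfrac{p+1}{2}\big(\|\nabla u\|_{L^2}^2-E(u_0)\big)$ one has $W(t)=-(2N(p-1)-8)\|\nabla u(t)\|_{L^2}^2+2N(p-1)E(u_0)$; since $p>1+4/N$ the first coefficient is positive, and since $Q$ saturates $W=0$ exactly, being strictly super-threshold forces strict negativity (in particular the case $E(u_0)<0$ is transparent).

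\emph{Step 2: localized virial.} Fix a smooth radial $\phi\ge0$ with $\phi(x)=|x|^2$ on $|x|\le1$, $\phi$ constant on $|x|\ge2$, $\nabla^2\phi\le2I$, and $\Delta\phi,\Delta^2\phi$ bounded; set $\phi_R(x)=R^2\phi(x/R)$ and $V_R(t)=\int\phi_R(x)|u(x,t)|^2\,dx$. Then $0\le V_R(t)\le CR^2M(u_0)$ and $|V_R'(t)|\le CRM(u_0)^{1/2}B$, while a direct computation together with the convexity bound $\nabla^2\phi_R\le2I$ (which reproduces the exact virial on $|x|\le R$ without overshooting outside) gives
\[
V_R''(t)\le W(t)+C\int_{|x|>R}|u(t)|^{p+1}\,dx+CR^{-2}M(u_0)\le -\delta+C\int_{|x|>R}|u(t)|^{p+1}\,dx+CR^{-2}M(u_0).
\]
Since $u$ is bounded in $H^1$, a cut-off version of Gagliardo--Nirenberg on $\{|x|>R/2\}$ gives $\int_{|x|>R}|u(t)|^{p+1}\le C_B\big(\int_{|x|>R/2}|u(t)|^2\big)^{\beta}$ with $\beta=\tfrac{2(p+1)-N(p-1)}{4}>0$, positivity of $\beta$ being precisely the energy-subcriticality $p<1+4/(N-2)$.

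\emph{Step 3 (the main obstacle) and conclusion.} It remains to show that, under the standing assumption, the mass cannot escape to spatial infinity, i.e.\ $\sup_{t\ge0}\int_{|x|>R}|u(t)|^2\to0$ as $R\to\infty$; I expect this tightness statement to be the hard point. The structure available is that $E(u_0)<0$ (more generally the first inequality in \eqref{sub-condition}) forbids dispersion---it forces $\|u(t)\|_{L^{p+1}}\gtrsim1$---while the strict coercivity $W(t)\le-\delta$ forbids relaxation to a traveling ground state, for which $W\equiv0$; one then rules out a persistent outflow of $L^2$-mass, for instance by extracting from the bounded, bounded-derivative family $\{V_R\}_R$ a concentrated ``core'' that still carries negative energy and satisfies $W\le-\delta$, to which a virial centered at its slowly moving center applies and forces finite-time blow-up, contradicting global existence. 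Granting tightness, fix $R$ so large that $C\int_{|x|>R}|u(t)|^{p+1}\le\delta/4$ for all $t$ and $CR^{-2}M(u_0)\le\delta/4$; then $V_R''(t)\le-\delta/2$ for all $t\ge0$, so $V_R(t)\le V_R(0)+tV_R'(0)-\tfrac{\delta}{4}t^2\to-\infty$, contradicting $V_R\ge0$. The statement for negative times follows by time reversal.
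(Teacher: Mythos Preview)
Your Steps~1--2 are fine and match the paper's reduction: under \eqref{sub-condition} one obtains $Q(u(t))\le\beta_0<0$ uniformly in $t$ (your $W(t)=8Q(u(t))$), and the localized virial then gives $V_R''(t)\le 8\beta_0+C\int_{|x|>R}|u(t)|^{p+1}+CR^{-2}M(u_0)$.

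The gap is Step~3. You correctly identify uniform-in-time tightness $\sup_{t\ge0}\int_{|x|>R}|u(t)|^2\to 0$ as the crux, but you do not prove it; the sketch about ``extracting a concentrated core that still carries negative energy'' is not an argument. What you are gesturing at is essentially the Holmer--Roudenko rigidity/concentration-compactness machinery, which does establish precompactness modulo translation --- but that is exactly the argument the paper is written to \emph{bypass}.

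The paper's point is that uniform tightness is unnecessary. Instead one uses a first localized identity, with a cutoff $\phi$ jumping from $0$ to $1$ across $\{R/2\le|x|\le R\}$, to get an ``almost finite speed of propagation'' estimate: since $|I'(t)|\le \|\phi'\|_{L^\infty}\|u\|_{L^2}\|\nabla u\|_{L^2}\lesssim R^{-1}$, one has
\[
\int_{|x|>R}|u(t)|^2\le \eta_0+o_R(1)\qquad\text{for all }0\le t\le T:=c\,\eta_0 R,
\]
i.e.\ exterior smallness only on a time interval of length $\sim R$. On that interval the virial gives $V_R''(t)\le 8\beta_0+\widetilde C\eta_0^{\theta}+o_R(1)$; choosing $\eta_0$ so that $\widetilde C\eta_0^{\theta}\le|\beta_0|$ and integrating twice,
\[
V_R(T)\le V_R(0)+V_R'(0)\,T+\tfrac12\big(8\beta_0+|\beta_0|+o_R(1)\big)T^2.
\]
Now the second ingredient: since $u_0\in L^2$, one has $V_R(0)=\int_{|x|<2R}\phi_R|u_0|^2=o_R(1)\,R^2$ and $V_R'(0)=o_R(1)\,R$ (split at $|x|=\sqrt{R}$). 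With $T\sim R$ the negative $T^2\sim R^2$ term dominates both initial terms for $R$ large, forcing $V_R(T)<0$, a contradiction.

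So the fix is not to prove your tightness claim but to drop it: replace the unattainable uniform control by the cheap $[0,cR]$ control, and use $V_R(0)=o(R^2)$, $V_R'(0)=o(R)$ in place of the bounds $V_R(0)\le CR^2$, $|V_R'(0)|\le CR$ you wrote in Step~2 --- those cruder bounds are the reason your argument seems to need tightness for \emph{all} time.
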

\begin{remark}
Using energy conservation, it's easy to see that in Theorem
\ref{thm:main_subciritcal} the blow-up norm $\|\nabla u(t)\|_{L^2}$
could be improved to $\|u(t)\|_{L^q}$ for any $q\ge p+1$. But this
is not the case in the energy-supercritical.
\end{remark}
In this paper, we  give a simplified proof, which will be presented
in Section 3.

To prove the main theorems, we  adopt the idea of Glassey
\cite{Glassey77}. Because in our case, the initial data may not have
finite variance, we shall deal with localized virial identities. There are some technical difficulties,
which could be overcome by one observation and two techniques borrowed from scattering theory.
The observation is that the gradient part in
the localized virial identities could be controlled by the gradient
part in the energy.
The first technique is the small $L^2$-estimate in
the exterior ball. It holds true in the relatively long time, which
depends on the radius of the ball. The second is the following
elementary estimate. Suppose $f\in L^1$, then
$$
\int_{|x|<R}|x|^k|f|\,dx =o(R^k),  \textrm{ as } R\to\infty.
$$
Note that one may not expect that the small $L^2$-estimate  in the
exterior ball keeps being right all the time. However, the time
period, in which the small $L^2$-estimate holds true, is long enough
to complete the proof.

This paper is organized as follows. In section 2, we
give the proof of Theorem \ref{main-thm} and Corollary
\ref{cor:main-thm}. Finally we prove Theorem
\ref{thm:main_subciritcal} in Section 3.

\section{Proof of Theorem \ref{main-thm}}

The major part of this section is the following theorem: Theorem
\ref{thm:2.1}, one corollary of which is Theorem \ref{main-thm}.
Before stating this theorem, we introduce some quantities. Let the
quantity
\begin{equation}\label{eqs1.1:Q-def}
    Q(u)\triangleq\int |\nabla u(x)|^2\,dx - \frac {N(p-1)}{2(p+1)}\int
    |u(x)|^{p+1}\,dx,
\end{equation}
then it is well-known as the virial identity that for the solution
$u$ of the equation \eqref{EQS-NLS},
$$
\frac {d^2}{dt^2}\int |x|^2|u(t,x)|^2\,dx=8Q(u(t)).
$$
It implies by Glassey's argument (see \cite{Glassey77}) that the
solution $u$ blows up in finite time if $xu_0\in L^2(\R^d)$ and
there exists $\beta_0<0$ such that
\begin{equation}\label{eqs1.1:Q-condition}
\sup\limits_{t\in (0,T_{max})}Q(u(t))\le \beta_0<0.
\end{equation}

\begin{thm}\label{thm:2.1}
Let $N, p, s$ be the same as in Theorem \ref{main-thm} or Theorem
\ref{thm:main_subciritcal}. Then if there exists $\beta_0<0$ such
that \eqref{eqs1.1:Q-condition} holds, there exists no global
solution $u\in C(\R^+; H^s)$ with
\begin{equation}\label{eqs2.1:050516:53}
\sup\limits_{t\in \R^+}\|u(t,\cdot)\|_{L^q_x}<\infty, \quad \mbox{
for some } q>p+1.
\end{equation}
\end{thm}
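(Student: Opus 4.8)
The plan is to argue by contradiction using a \emph{localized} virial identity, following Glassey's strategy but without the finite-variance assumption. Suppose such a global solution $u\in C(\R^+;H^s)$ exists, satisfying \eqref{eqs1.1:Q-condition} and \eqref{eqs2.1:050516:53}. Fix a smooth radial cutoff $\varphi\ge 0$ with $\varphi(x)=|x|^2$ for $|x|\le 1$ and $\varphi$ constant for $|x|\ge 2$, and for $R>0$ set $\varphi_R(x)=R^2\varphi(x/R)$. Define the localized variance $V_R(t)=\int \varphi_R(x)|u(t,x)|^2\,dx$. A direct computation (using the equation and that $s>s_c$ guarantees enough regularity/decay to justify the differentiation) gives
$$
V_R'(t)=2\,\mathrm{Im}\int \nabla\varphi_R\cdot\overline{u}\,\nabla u\,dx,
$$
$$
V_R''(t)=8Q(u(t))+\mathcal{E}_R(t),
$$
where $\mathcal{E}_R(t)$ collects all the error terms coming from the region $R\le |x|\le 2R$: schematically, terms of the form $\int_{R\le|x|\le 2R}\big(|\nabla u|^2+R^{-2}|u|^2+|u|^{p+1}\big)\,dx$, with bounded coefficients depending on $\varphi$. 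By \eqref{eqs1.1:Q-condition}, $8Q(u(t))\le 8\beta_0<0$, so if we can show that $\mathcal{E}_R(t)$ is small compared to $|\beta_0|$ — uniformly on a sufficiently long time interval — and also control $|V_R'(t)|$, then $V_R(t)$ becomes negative in finite time, contradicting $V_R\ge 0$.

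The heart of the matter is therefore estimating $\mathcal{E}_R(t)$, and this is where the ``observation and two techniques'' mentioned in the introduction enter. First, the gradient contribution $\int_{|x|\ge R}|\nabla u|^2\,dx$ is \emph{not} obviously small; the observation is that it is controlled by the gradient part of the energy, and since $E(u_0)<0$ forces $\int|\nabla u|^2\le \frac{2}{p+1}\int|u|^{p+1}$, together with the uniform $L^q$-bound \eqref{eqs2.1:050516:53} (interpolated against the conserved mass to dominate $\|u\|_{L^{p+1}}$, using $q>p+1$), one gets a \emph{global-in-time} bound $\sup_t\|\nabla u(t)\|_{L^2}\le C$. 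This turns the gradient error into a genuine tail: $\int_{|x|\ge R}|\nabla u|^2\,dx$ is the tail of an $L^1$ function, but uniform-in-$t$ smallness of this tail still needs work — and that is the second technique: a small $L^2$-mass estimate in the exterior ball $\{|x|\ge R\}$, valid on a time interval of length $\sim R$ (proved via a further localized virial/Moraweta-type argument controlling $\frac{d}{dt}\int_{|x|\ge R}|u|^2$), which in turn controls $\int_{R\le|x|\le 2R}R^{-2}|u|^2$ and, by Gagliardo--Nirenberg localized to the annulus plus the uniform $\|u\|_{L^q}$ bound, also the potential term $\int_{R\le |x|\le 2R}|u|^{p+1}$. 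The elementary estimate $\int_{|x|<R}|x|^k|f|\,dx=o(R^k)$ for $f\in L^1$ is used to handle $|V_R'(t_0)|\le 2\|\nabla\varphi_R\|_\infty^{1/2}\cdots$ type quantities, or more precisely to show the initial data contributes $o(R)$ to $V_R'$ relative to the $R^2$ scale, so it does not spoil the convexity argument.

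Quantitatively, the scheme is: choose $R$ large; on the interval $[0,cR]$ we have $V_R''(t)\le 8\beta_0+\mathcal{E}_R(t)\le 4\beta_0<0$ once $R$ is large enough that $\sup_{t\in[0,cR]}\mathcal{E}_R(t)\le 4|\beta_0|$; then $V_R(t)\le V_R(0)+V_R'(0)t+2\beta_0 t^2$, and since $V_R(0)=o(R^2)$ and $V_R'(0)=o(R^2)$ (by the elementary $o(R^k)$ estimate applied to $|u_0|^2\in L^1$ and to $\overline{u_0}\nabla u_0\in L^1$), the right-hand side is $\le o(R^2)+2\beta_0 t^2$, which becomes negative well before $t=cR$ for $R$ large — contradiction. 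The main obstacle, and the step requiring the most care, is establishing the exterior small-$L^2$ estimate with the \emph{correct long time scale} ($\sim R$ rather than $\sim R^{1/2}$ or worse): one must set up an auxiliary monotonicity quantity, differentiate it, absorb the gradient terms using the uniform $\|\nabla u\|_{L^2}$ bound from the negative-energy observation, and close a Gronwall-type inequality; getting the time scale right is exactly what makes the convexity argument win, since we need $\mathcal{E}_R$ small on a time interval long enough for the $2\beta_0 t^2$ term to dominate the $o(R^2)$ initial contributions.
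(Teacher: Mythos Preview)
Your overall architecture is right and matches the paper's: contradiction, localized virial, exterior $L^2$ smallness on a time interval of length $\sim R$, and the $o(R^k)$ estimate for $V_R(0)$ and $V_R'(0)$. But there is one genuine gap, and it is exactly at the point you flag as delicate.

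You write the error as $\int_{R\le|x|\le 2R}\big(|\nabla u|^2+R^{-2}|u|^2+|u|^{p+1}\big)$ and then propose to control the gradient piece by first observing $\sup_t\|\nabla u(t)\|_{L^2}<\infty$ and then invoking the exterior small-$L^2$ estimate. Neither step helps: a uniform bound on $\|\nabla u(t)\|_{L^2}$ gives no uniform-in-$t$ smallness of the tail $\int_{|x|\ge R}|\nabla u|^2$, and the exterior mass estimate controls $|u|^2$, not $|\nabla u|^2$. There is no mechanism in your plan that makes the gradient error small on $[0,cR]$, so the inequality $V_R''\le 4\beta_0$ is not established. The paper's ``observation'' is different from how you read it: with a radial cutoff satisfying $\phi''\le 2$ and $\phi'\le 2r$ everywhere (so $\phi=r^2$ on $\{r\le R\}$, with a concave transition to a constant), the gradient part of $V_R''$ is \emph{pointwise} dominated by $8\int|\nabla u|^2$, i.e.\ the gradient error term has a sign, $R_1\le 0$. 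No smallness is needed at all for this piece. Once $R_1\le 0$, the only errors to beat are the potential tail $\int_{|x|>R}|u|^{p+1}$ (handled by straight interpolation between the conserved $L^2$ mass on $\{|x|>R\}$ and the assumed uniform $L^q$ bound, $q>p+1$; no Gagliardo--Nirenberg is needed, and you should avoid it since it would reintroduce the gradient tail) and the bi-Laplacian term $\lesssim R^{-2}\int_{|x|>R}|u|^2$.

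Two smaller points. First, you write $V_R'(0)=o(R^2)$, but you actually need and later use $V_R'(0)=o(R)$ (so that $V_R'(0)\cdot t=o(R^2)$ for $t\sim R$); the elementary estimate applied to $|\bar u_0\nabla u_0|\in L^1$ with $k=1$ gives exactly $o(R)$. Second, the bound $\sup_t\|\nabla u(t)\|_{L^2}<\infty$ does not require $E(u_0)<0$ (Theorem \ref{thm:2.1} does not assume it): interpolate $L^{p+1}$ between $L^2$ and $L^q$, then use energy conservation $\|\nabla u\|_{L^2}^2=E(u_0)+\tfrac{2}{p+1}\|u\|_{L^{p+1}}^{p+1}$. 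This bound is still essential, but its role is in the exterior $L^2$ lemma (to get $|I'(t)|\lesssim R^{-1}$ and hence the time scale $\sim R$), not in the gradient error.
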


\subsection{The Local Theory}
In this subsection, we establish the following local result on the
problem \eqref{EQS-NLS}.
\begin{prop}[Local existence]\label{prop:local theory} Let $s\ge s_c$, and $N, p, s_c$ be as in Theorem \ref{main-thm}. Then for any $u_0\in H^s(\R^N)$,
there exists a unique local solution $u\in C([0,T);H^s(\R^N))$ of
\eqref{EQS-NLS}. Moreover, if $s>s_c$, the lifetime $T$ is only
dependent on $\|u_0\|_{H^s}$.
\end{prop}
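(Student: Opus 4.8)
The plan is to solve the Duhamel integral equation
\[
u(t)=e^{it\Delta}u_0+i\int_0^t e^{i(t-\tau)\Delta}\big(|u|^{p-1}u\big)(\tau)\,d\tau=:\Phi(u)(t)
\]
by a contraction mapping argument in a Strichartz space at regularity $s$, following the Cazenave--Weissler scheme \cite{CaWe-NA-93}. Fix an admissible pair $(q,r)$ (so $\tfrac2q+\tfrac Nr=\tfrac N2$, $r<\infty$), adapted to the nonlinearity so that the Sobolev embedding $W^{s,r}\hookrightarrow L^{\sigma}$ holds for the Hölder exponent $\sigma$ dictated by scaling; in the borderline case $s=s_c$ one takes instead a $\dot H^{s_c}$-admissible pair. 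Let
\[
X_T=\Big\{u\in C([0,T];H^s)\cap L^q([0,T];W^{s,r}):\ \|u\|_{C_tH^s}+\|u\|_{L^q_tW^{s,r}_x}\le 2C\|u_0\|_{H^s}\Big\},
\]
a complete metric space under the weaker distance $\|u-v\|_{C_tL^2}+\|u-v\|_{L^q_tL^r_x}$. A key simplification from the hypothesis that $p$ is an odd integer is that $F(u)=|u|^{p-1}u=u^{(p+1)/2}\,\overline u^{(p-1)/2}$ is a polynomial in $(u,\overline u)$: it is smooth, and the fractional Leibniz rule may be applied at the level $s$ with no upper bound on $s$, which removes the usual low-regularity obstruction present for non-integer $p$.

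The core estimate is then: by the homogeneous and inhomogeneous (dual) Strichartz inequalities,
\[
\|\Phi(u)\|_{C_tH^s}+\|\Phi(u)\|_{L^q_tW^{s,r}_x}\le C\|u_0\|_{H^s}+C\big\||\nabla|^sF(u)\big\|_{L^{\tilde q'}_tL^{\tilde r'}_x},
\]
and the fractional Leibniz rule, Hölder in $x$, the Sobolev embedding $W^{s,r}\hookrightarrow L^\sigma$ and Hölder in $t$ yield
\[
\big\||\nabla|^sF(u)\big\|_{L^{\tilde q'}_tL^{\tilde r'}_x}\lesssim T^{\theta}\,\|u\|_{L^q_tW^{s,r}_x}^{\,p},
\]
where the decisive point is that the time exponent satisfies $\theta>0$ exactly when $s>s_c$ (and $\theta=0$ when $s=s_c$, in which case one instead absorbs the loss using $\|e^{it\Delta}u_0\|_{L^q_tW^{s,r}_x([0,T])}\to0$ as $T\downarrow0$ for fixed $u_0$ by dominated convergence; this is why the lifespan then depends on the profile of $u_0$ and not only on $\|u_0\|_{H^s}$). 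Combining gives $\|\Phi(u)\|_{X_T}\le C\|u_0\|_{H^s}+C\,T^{\theta}(2C\|u_0\|_{H^s})^p$, so $\Phi$ maps $X_T$ to itself once $T=T(\|u_0\|_{H^s})$ is small; the analogous difference estimate --- legitimate because $F(u)-F(v)$ is a sum of monomials in $u,v,\overline u,\overline v$ times $u-v$ --- makes $\Phi$ a contraction for $T$ possibly smaller.

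Finally I would apply the Banach fixed point theorem to obtain the unique $u\in X_T$, deduce $u\in C([0,T];H^s)$ from the Duhamel formula together with the Strichartz bounds, and establish unconditional uniqueness in $C([0,T];H^s)$ by a short-time bootstrap using the same nonlinear estimate on the difference of two solutions. Since every constant above involves $u_0$ only through $\|u_0\|_{H^s}$, the final assertion that $T$ may be taken to depend only on $\|u_0\|_{H^s}$ when $s>s_c$ is immediate. The main obstacle is the nonlinear estimate: one must choose the Strichartz exponents so that admissibility, the Sobolev embedding and the degree-$p$ Hölder relation hold simultaneously and produce a genuinely positive power of $T$. For $p$ a large odd integer and $N\ge3$ in the energy-critical/supercritical range this reduces to a finite compatibility check; it is precisely the oddness of $p$ that lets the fractional chain/product rule run at arbitrary $s$, and precisely the gap $s>s_c$ that furnishes the $T$-power.
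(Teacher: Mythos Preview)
Your proposal is correct and follows essentially the same route as the paper: Duhamel formula, contraction in a Strichartz space at level $s$ equipped with a weaker ($L^2$-based) metric to avoid differentiating the difference, with the nonlinear estimate obtained via the (fractional) Leibniz/chain rule. The only cosmetic difference is that the paper phrases the key bound as $\||\nabla|^sF(u)\|\lesssim \|u\|_{SN_s}\|u\|_{SN_{s_c}}^{p-1}$ (placing $p-1$ factors at the critical scaling, which then yields the $T$-gain when $s>s_c$) and, for added generality, also treats the case $0<p-[s]<1$ via the fractional chain rule rather than relying solely on $p$ odd.
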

\begin{proof} Since the argument is standard, see c.f.
\cite{CaWe-NA-93}, \cite{KiVi10}, we give the proof much briefly.
More generally, we may consider $p>s$ or $p$ is odd.  Let
$I=[0,\delta]$, for some small $\delta>0$ decided later. According
to the Duhamel formula, for $F(u)=|u|^{p-1}u$, we define
$$
\Phi(u(t))=e^{it\Delta}u_0+\int_0^t
e^{i(t-\tau)\Delta}F(u(\tau))\,d\tau.
$$
Let the Strichartz space
$$
SN_s=\bigcap\limits_{(\rho,\gamma,\sigma)\in \Lambda_s}L^\rho_t
W^{\sigma,\gamma}_x(I\times \R^N), \quad
\Lambda_s=\{(\rho,\gamma,\sigma): \frac 2\rho+\frac
d\gamma-\sigma=\frac d2-s,2\le \rho,\gamma,\le \infty\}.
$$
Making using of Strichartz estimates (see \cite{GiVe-strichartz},
\cite{KeTa-Strichartz}) and Sobolev inequality, we have
$$
\|\Phi(u)\|_{SN_s}\le
\|e^{it\Delta}u_0\|_{SN_s}+\||\nabla|^{s}F(u)\|_{L^{q'_0}_tL^{p'_0}_x(I\times\R^N)},
$$
where $\frac 2q_0+\frac N{p_0}=\frac N2, 2\le q_0\le \infty, 2\le
p_0 <\infty$.
 Then the proposition follows by the
standard fixed point theory (in which for the sake of convenience
one may choose the weaker norm $L^\rho_t L^{\gamma}_x(I\times \R^N)$
, for some $(\rho,\gamma,0)\in \Lambda_s$ to be the distance, in
order to avoiding differentiating), once we establish
\begin{equation}\label{NL-estimate}
    \||\nabla|^{s}F(u)\|_{L^{q'_0}_tL^{p'_0}_x(I\times\R^N)}\le
    C\|u\|_{SN_s}\|u\|_{SN_{s_c}}^{p-1}.
\end{equation}
Indeed, it easily follows from the chain rule and H\"{o}lder's
inequality for the regular case, thus we only consider the case when
$0<p-[s]<1$, where we also need additional tool of the fractional
chain rule (see \cite[Lemma 2.6]{KiVi10} for example). In this case,
\begin{equation}\label{eqs:fractional_chain}
\aligned
    \||\nabla|^{s}F(u)\|_{L^{q'_0}_tL^{p'_0}_x(I\times\R^N)}\le C
    \left\||\nabla|^{s-[s]}\big(\nabla^{[s]}F(u)\big)\right\|_{L^{q'_0}_tL^{p'_0}_x(I\times\R^N)}\\
    \le
    C\left\||\nabla|^{s-[s]}\big(F_1(u)\>F_2(u)\big)\right\|_{L^{q'_0}_tL^{p'_0}_x(I\times\R^N)},
\endaligned
\end{equation}
where $F_1(u)$ is a combination of the terms typing as
$$
\partial^{\alpha_1}u\cdots \partial^{\alpha_J}u\cd \partial^{\beta_1}\bar{u}\cdots
\partial^{\beta_K}\bar{u},
$$
for $\alpha_1+\cdots+\alpha_J+\beta_1+\cdots+\beta_K=[s],
  0\le|\alpha_j|,
|\beta_k|\le [s]$ for $ 1\le j\le J, 1\le k\le K;$ and $F_2(u)$ is a
H\"{o}lder continuous function of order $p-[s]$. Then by
H\"{o}lder's inequality and the fractional chain rule,
\begin{equation*}
\aligned
    \eqref{eqs:fractional_chain}\le &C\big\||\nabla|^{s-[s]}F_1(u)\big\|_{L^{q_1}_tL^{r_1}_x}\|F_2(u)\|_{L^{q_2}_tL^{r_2}_x}
    +C\|F_1(u)\|_{L^{q_3}_tL^{r_3}_x}\big\||\nabla|^{s-[s]}F_2(u)\big\|_{L^{q_4}_tL^{r_4}_x}\\
    \le & C \|u\|_{SN_s}\|u\|_{SN_{s_c}}^{[s]-1}\cd
    \|u\|_{SN_{s_c}}^{p-[s]}+
    C \|u\|_{SN_{s_c}}^{[s]}\cd
    \||\nabla|^{\alpha}u\|_{_{L^{q_5}_tL^{r_5}_x}}^{\frac{s-[s]}{\alpha}}\|u\|_{_{L^{q_6}_tL^{r_6}_x}}^{p-[s]-\frac{s-[s]}{\alpha}}\\
    \le &C\|u\|_{SN_s}\|u\|_{SN_{s_c}}^{p-1},
\endaligned
\end{equation*}
where
$$
\frac 1{q'_0}=\frac 1{q_1}+\frac 1{q_2}=\frac 1{q_3}+\frac
1{q_4},\quad\frac 1{p'_0}=\frac 1{r_1}+\frac 1{r_2}=\frac
1{r_3}+\frac 1{r_4}, \quad \big((p-[s])q_2,(p-[s])r_2,0\big)\in
\Lambda_{s_c};
$$
and
$$
\Big([s]q_3, \frac{N-[s]r_3}{N[s]r_3}\Big)\in \Lambda_{s_c},\quad
\frac {s-[s]}{p-[s]}<\alpha<1, \quad (q_5,r_5,0)\Lambda_{s-\alpha},
\quad (q_6,r_6,0)\in \Lambda_{s_c}.
$$
This proves \eqref{NL-estimate} and thus finishes the proof of the
proposition.
\end{proof}

\subsection{The proof of Theorem 2.1}
Roughly speaking, Theorem 2.1 says that there exist no global
solutions whose $L^q$ norms are uniformly bounded in time. We prove
the Theorem 2.1 by contradiction argument. Assume the contrary, then
we have
$$
C_0\triangleq\sup\limits_{t\in \R^+}\|u(t)\|_{L^q}<+\infty.
$$
Then we can show that there exists
$0<\overline{C_0}=\overline{C_0}(C_0, M(u_0), E(u_0))<\infty$, such
that
$$
\overline{C_0}=\sup\limits_{t\in \R^+}\|\nabla u(t)\|_{L^2}.
$$
Indeed, it is first bounded for $L^{p+1}$-norm by interpolation
between $L^{q}$ and $L^2$. Then the boundedness of  $\dot H^1$-norm
follows from the energy conservation law.

Consider the local Virial identity and let
\begin{equation}\label{eqs-I}
I(t)= \int \phi(x)|u(t,x)|^2\,dx,
\end{equation}
then by direct computations (see for examples \cite{Glassey77},
\cite{KeMe06}), one has
\begin{lem} For any $\phi\in C^4(\R^N)$,
\begin{eqnarray}
I'(t)&=&2\textrm{Im} \int \nabla \phi \cd \nabla u
\bar{u}\,dx;\label{eq2.0:eq-I'}\\
I''(t)&=&4\textrm{Re} \sum\limits_{j,k}^N\int
\partial_j\partial_k\phi \cd \partial_j u
\partial_k\bar{u}\,dx-2\frac {p-1}{p+1}\int \Delta \phi
|u|^{p+1}\,dx-\int \Delta^2 \phi |u|^{2}\,dx.\label{eq2.0:eq-I''}
\end{eqnarray}
\end{lem}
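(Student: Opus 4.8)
The plan is to obtain both formulas by differentiating under the integral sign and inserting the equation \eqref{EQS-NLS}, performing the computation first for Schwartz initial data (so that every integration by parts is legitimate and no boundary terms survive) and extending to the $H^s$-solution by approximation at the end. Since $\phi$ is independent of $t$, $I'(t)=\int\phi\,\partial_t|u|^2\,dx=2\,\textrm{Re}\int\phi\,\bar u\,u_t\,dx$. From \eqref{EQS-NLS}, $u_t=i(\Delta u+|u|^{p-1}u)$, so $\textrm{Re}(\bar u u_t)=\textrm{Re}\big(i\bar u\Delta u+i|u|^{p+1}\big)=-\textrm{Im}(\bar u\Delta u)$, giving $I'(t)=-2\int\phi\,\textrm{Im}(\bar u\Delta u)\,dx$. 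Integrating by parts once, $\int\phi\,\bar u\,\Delta u\,dx=-\int\phi\,|\nabla u|^2\,dx-\int\bar u\,\nabla\phi\cd\nabla u\,dx$; the first term is real and drops out on taking the imaginary part, which is exactly \eqref{eq2.0:eq-I'}.

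For \eqref{eq2.0:eq-I''} I would differentiate \eqref{eq2.0:eq-I'} in time in the form $I'(t)=2\int\nabla\phi\cd\textrm{Im}(\bar u\nabla u)\,dx$, substituting $u_t=i(\Delta u+|u|^{p-1}u)$ and its conjugate into $\partial_t\textrm{Im}(\bar u\nabla u)=\textrm{Im}(\bar u_t\nabla u)+\textrm{Im}(\bar u\nabla u_t)$, and split the integrand into a nonlinear piece and a linear piece. The nonlinear piece collapses via the pointwise identities $|u|^{p-1}\,\textrm{Re}(\bar u\nabla u)=\frac1{p+1}\nabla(|u|^{p+1})$ and $\textrm{Re}\big(\bar u\,\nabla(|u|^{p-1}u)\big)=\frac{p}{p+1}\nabla(|u|^{p+1})$, which together give the contribution $\frac{p-1}{p+1}\nabla(|u|^{p+1})$; multiplying by $\nabla\phi$ and integrating by parts produces $-2\frac{p-1}{p+1}\int\Delta\phi\,|u|^{p+1}\,dx$. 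The linear piece is $2\int\partial_\ell\phi\,\big[\textrm{Re}(\bar u\,\partial_\ell\Delta u)-\textrm{Re}(\Delta\bar u\,\partial_\ell u)\big]\,dx$ (summation in $\ell$), which I would handle by repeatedly integrating by parts to move the second-order derivatives off $u$: terms where a derivative falls back onto a first derivative of $u$ become $\tfrac12\,\partial_\ell\phi\,\partial_\ell|\nabla u|^2$ and, after one further integration by parts, turn into $\int\Delta\phi\,|\nabla u|^2$-type terms that cancel in the sum; the term with all derivatives on $\phi$ gives, after moving $\nabla(\Delta\phi)$ onto $|u|^2$, precisely $-\int\Delta^2\phi\,|u|^2\,dx$; and the two surviving terms $\textrm{Re}\int\partial_j\partial_k\phi\,\partial_j u\,\partial_k\bar u\,dx$ and $\textrm{Re}\int\partial_j\partial_k\phi\,\partial_j\bar u\,\partial_k u\,dx$ coincide by the symmetry $\partial_j\partial_k\phi=\partial_k\partial_j\phi$, adding up to $4\,\textrm{Re}\sum_{j,k}\int\partial_j\partial_k\phi\,\partial_j u\,\partial_k\bar u\,dx$. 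Collecting the three contributions yields \eqref{eq2.0:eq-I''}.

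I expect the only real work to be the bookkeeping in the linear piece — a handful of integrations by parts whose terms must be grouped correctly using the symmetry of the Hessian of $\phi$ — and that the genuine point needing care is the passage from smooth solutions to the actual solution, which a priori lies only in $C([0,T);H^s)$ with $s$ possibly below $2$, so that $\Delta u$ and the formal manipulations need not literally make sense. This I would resolve by first proving the identities for Schwartz data, where persistence of regularity (available through Proposition \ref{prop:local theory}) keeps the flow smooth and decaying on all of $[0,T)$; by then noting that, for $\phi$ with derivatives of orders $1$ through $4$ bounded — which is all that is used in the applications below — every term in \eqref{eq2.0:eq-I'}--\eqref{eq2.0:eq-I''} depends continuously on $u$ in the $H^1$ topology, the $L^{p+1}$ term being controlled via the uniform $L^q$-bound with $q>p+1$; and finally by approximating $u_0$ in $H^s$ by smooth data and passing to the limit in both sides.
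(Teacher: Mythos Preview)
Your proposal is correct and is precisely the ``direct computation'' that the paper invokes: the paper does not supply its own proof of this lemma but simply refers to \cite{Glassey77} and \cite{KeMe06}, so your derivation is in fact more detailed than what appears there. Your treatment of the density/approximation step (restricting to $\phi$ with bounded derivatives of orders $1$--$4$, which is all that the subsequent applications require) is also appropriate and goes slightly beyond what the paper states.
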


If $\phi$ is radial, then one may find that
\begin{equation}\label{eqs-I'}
I'(t)=2\textrm{Im} \int \phi'\, \frac{x\cd \nabla u}{r} \bar{u}\,dx,
\end{equation}
\begin{equation}\label{eqs-I''}
\aligned I''(t)=&4\int \frac{\phi'}{r}|\nabla u|^2\,dx+4\int
\Big(\frac{\phi''}{r^2}-\frac{\phi'}{r^3}\Big)|x\cd\nabla
u|^2\,dx\\
&-2\frac {p-1}{p+1}\int \Big(\phi''+(N-1)\frac{\phi'}{r}\Big)
|u|^{p+1}\,dx-\int \Delta^2 \phi |u|^{2}\,dx,
\endaligned
\end{equation}
here and in the sequel, $r$ denotes $|x|$.

\subsubsection{Virial identity-I and $L^2$-estimate in the exterior
ball}

Fix some large constant $R>0$, which will be decided later, and
choose $\phi$ in \eqref{eqs-I} such that
\begin{equation}\label{eq2.1:eqs-phi-1}
\phi=
   \left\{ \aligned
    &0,\quad\;  0\le r \le R/2,\\
    &1,\quad\;  r\ge R,
   \endaligned
  \right.
\end{equation}
and
$$
0\le\phi\le 1, \quad \phi'\le \frac 4R.
$$
Let $\|u_0\|_{L^2}=m_0$, then by \eqref{eqs-I'},
\begin{equation*}
\aligned I(t)&=I(0)+\int_0^t I'(t')\,dt'\\
&\le I(0)+t \|\phi'\|_{L^\infty}\>\| u\|_{L^2}\>\|\nabla
u\|_{L^2}\\
 &\le \int_{|x|\ge R/2}|u_0|^2\,dx+\frac{4m_0\overline{C_0}t}{R}.
\endaligned
\end{equation*}
Observe that
$$
\int_{|x|\ge R/2}|u_0(x)|^2\,dx=o_R(1),
$$
and
$$
\int_{|x|\ge R}|u(t,x)|^2\,dx\le I(t).
$$
To summarize, we obtain that
\begin{lem} Fixing $\eta_0>0$, then for any $t\le
{\eta_0R}/(4m_0\overline{C_0})$, we have
\begin{equation}\label{lem2.1:L2-local}
\int_{|x|\ge R}|u(t,x)|^2\,dx\le \eta_0+ o_R(1).
\end{equation}
\end{lem}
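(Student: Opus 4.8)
The plan is to assemble the three displays that immediately precede the statement into the claimed bound, spelling out how the hypothesis on $t$ annihilates the term growing in time and why the remaining contribution is a genuine $o_R(1)$.

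First I would record that, under the contradiction hypothesis of Theorem~\ref{thm:2.1}, the constant $\overline{C_0}=\sup_{t\in\R^+}\|\nabla u(t)\|_{L^2}$ is finite and depends only on $C_0=\sup_t\|u(t)\|_{L^q}$, $M(u_0)$ and $E(u_0)$: indeed $\|u(t)\|_{L^{p+1}}$ is controlled by interpolating between $L^q$ and $L^2$ (with the $L^2$-norm frozen by mass conservation), and then $\|\nabla u(t)\|_{L^2}$ is controlled via the energy identity in \eqref{eqs:energy-mass}. In particular $0<\overline{C_0}<\infty$, so the threshold $\eta_0 R/(4m_0\overline{C_0})$ is a positive quantity, and one that grows linearly in $R$ — which is the real point of the lemma.

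Next, with $\phi$ chosen as in \eqref{eq2.1:eqs-phi-1} and $m_0=\|u_0\|_{L^2}$, I would use \eqref{eqs-I'}, the Cauchy--Schwarz inequality and mass conservation exactly as in the displayed computation above to get
\[
I(t)\le I(0)+\frac{4m_0\overline{C_0}}{R}\,t,
\qquad
I(0)=\int\phi\,|u_0|^2\,dx\le\int_{|x|\ge R/2}|u_0|^2\,dx,
\]
the last inequality because $0\le\phi\le1$ and $\phi$ is supported in $\{|x|\ge R/2\}$. Since $u_0\in L^2(\R^N)$, dominated convergence gives $\int_{|x|\ge R/2}|u_0|^2\,dx=o_R(1)$. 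Finally, since $\phi\equiv1$ on $\{|x|\ge R\}$ and $\phi\ge0$, one has $\int_{|x|\ge R}|u(t,x)|^2\,dx\le I(t)$; hence for every $t\le\eta_0 R/(4m_0\overline{C_0})$ the growth term is at most $\eta_0$, and \eqref{lem2.1:L2-local} follows.

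I expect no serious obstacle here: the lemma is essentially bookkeeping, its content being that the smallness of the exterior $L^2$-mass persists on a time window of length proportional to $R$. The one point worth stressing is that $\eta_0$ is a fixed constant chosen independently of $R$; the resulting long admissible interval $[0,\eta_0R/(4m_0\overline{C_0})]$ is exactly the feature that will be exploited when $R\to\infty$ in the localized virial argument, and it is why no decay assumption on $u_0$ beyond $u_0\in L^2$ is needed.
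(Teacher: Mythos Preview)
Your proposal is correct and follows essentially the same route as the paper: the paper's ``proof'' is precisely the three displays preceding the lemma, and you have simply spelled them out with the justifications (Cauchy--Schwarz, support and range of $\phi$, dominated convergence for the $o_R(1)$) made explicit. The extra paragraph recalling why $\overline{C_0}<\infty$ under the contradiction hypothesis is a helpful reminder but not new content.
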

\begin{remark}
Roughly speaking, the lemma above means that the solution has the
almost finite speed of propagation. To the authors' best knowledge, the
property was first discovered by Lin and Strauss \cite{LiSt} for the
defocusing equation, and widely used in the scattering theory since then. The readers may refer \cite{Ca-book-03,
MiZh-book} for detailed introduction.
\end{remark}

\subsubsection{Virial identity-II}

We rewrite $I''(t)$ in \eqref{eqs-I''} as
\begin{equation}\label{eq2.2:eq-I''}
 I''(t)
=8Q(u(t))+R_1+R_2+R_3,
\end{equation}
and
\begin{equation}
\aligned
R_1=&4\int (\frac{\phi'}{r}-2)|\nabla u|^2\,dx+4\int
\Big(\frac{\phi''}{r^2}-\frac{\phi'}{r^3}\Big)|x\cd\nabla
u|^2\,dx,\\
R_2=& -2\frac {p-1}{p+1}\int
\Big(\phi''+(N-1)\frac{\phi'}{r}-2N\Big)
|u|^{p+1}\,dx,\\
R_3=&-\int \Delta^2 \phi |u|^{2}\,dx.
\endaligned
\end{equation}
Roughly speaking, $R_1,R_2$, and $R_3$ are the error terms from the
localization. We choose $\phi$ such that
\begin{equation}\label{eqs-phi-1}
0\le \phi\le r^2,\quad  \phi''\le 2,\quad  \phi^{(4)}\le
\frac{4}{R^2},
\end{equation}
and
\begin{equation}\label{eqs-phi-2}
\phi=
   \left\{ \aligned
    &r^2,\quad\;  0\le r \le R,\\
    &0,\quad\;  r\ge 2R.
   \endaligned
  \right.
\end{equation}
Then we have
\begin{lem}\label{lem2.2:I''}
There exist two constants $\widetilde{C}(s,p,N,m_0,C_0)>0,
\theta_q>0$, such that
$$
I''(t)\le 8Q(u(t))+\widetilde{C}\|u\|_{L^2(|x|>R)}^{\theta_q}.
$$
\end{lem}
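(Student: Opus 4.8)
The plan is to estimate $R_1+R_2+R_3$ from the decomposition \eqref{eq2.2:eq-I''}, using that the cut-off $\phi$ from \eqref{eqs-phi-1}--\eqref{eqs-phi-2} is \emph{exactly} $r^2$ on $\{|x|\le R\}$ and \emph{exactly} $0$ on $\{|x|\ge 2R\}$. First I would record that on $\{|x|\le R\}$ one has $\phi'/r=2$, $\phi''=2$, $\phi''/r^2-\phi'/r^3=0$, $\phi''+(N-1)\phi'/r-2N=0$ and $\Delta^2\phi=0$, so every integrand occurring in $R_1$, $R_2$, $R_3$ vanishes there; hence all three error terms are supported in $\{|x|>R\}$ and it suffices to bound each of them over that set.

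The heart of the matter is $R_1$. Writing the radial derivative $\partial_r u=\frac{x}{r}\cd\nabla u$, so that $x\cd\nabla u=r\,\partial_r u$, and splitting $|\nabla u|^2=|\partial_r u|^2+\big(|\nabla u|^2-|\partial_r u|^2\big)$ with the tangential part $|\nabla u|^2-|\partial_r u|^2\ge 0$, the two integrals defining $R_1$ recombine into
\[
R_1=4\int(\phi''-2)\,|\partial_r u|^2\,dx+4\int\Big(\frac{\phi'}{r}-2\Big)\big(|\nabla u|^2-|\partial_r u|^2\big)\,dx .
\]
By \eqref{eqs-phi-1} we have $\phi''\le 2$, and since $\phi=r^2$ near $r=0$ forces $\phi'(0)=0$, the same bound gives $\phi'(r)=\int_0^r\phi''(s)\,ds\le 2r$, i.e. $\phi'/r\le 2$. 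Thus both integrands above are nonpositive, so $R_1\le 0$. This cancellation --- the localized gradient terms being absorbed by the gradient term contained in $8Q(u)$ --- is the observation on which the whole argument rests, and verifying it (together with choosing $\phi$ so that also $|\phi''|$ and $|\phi'|/r$ stay bounded on the transition layer $R\le|x|\le 2R$) is the step I expect to be the most delicate.

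It remains to bound $R_2$ and $R_3$. On $\{R<|x|<2R\}$ the weight $\phi''+(N-1)\phi'/r-2N$ is bounded by a constant $C(N)$, while on $\{|x|\ge 2R\}$ it equals $-2N$, so $R_2\le C(N,p)\int_{|x|>R}|u|^{p+1}\,dx$. Interpolating between $L^2$ and $L^q$ on $\{|x|>R\}$ by H\"older with the exponents determined by $\frac{\theta_q}{2}+\frac{p+1-\theta_q}{q}=1$, that is $\theta_q=\frac{2(q-p-1)}{q-2}$ --- which obeys $0<\theta_q<2$ because $p+1<q$ and $p>1$ --- yields
\[
\int_{|x|>R}|u|^{p+1}\,dx\le\|u\|_{L^2(|x|>R)}^{\theta_q}\,\|u\|_{L^q(|x|>R)}^{p+1-\theta_q}\le C_0^{\,p+1-\theta_q}\,\|u\|_{L^2(|x|>R)}^{\theta_q}.
\]
For $R_3$, the radial formula for $\Delta^2\phi$ together with $\phi^{(4)}\le 4/R^2$ and the $R$-scaled bounds on $\phi'$, $\phi''$, $\phi'''$ on the layer give $|\Delta^2\phi|\le C(N)R^{-2}$, supported in $\{R<|x|<2R\}$, so $|R_3|\le C(N)R^{-2}\|u\|_{L^2(|x|>R)}^2$; assuming $R\ge 1$ and using mass conservation $\|u\|_{L^2}=m_0$ with $2-\theta_q>0$ turns this into $|R_3|\le C(N)\,m_0^{\,2-\theta_q}\,\|u\|_{L^2(|x|>R)}^{\theta_q}$. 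Putting the three bounds $R_1\le 0$, the estimate for $R_2$ and the estimate for $R_3$ back into \eqref{eq2.2:eq-I''} gives $I''(t)\le 8Q(u(t))+\widetilde C\,\|u\|_{L^2(|x|>R)}^{\theta_q}$ with $\widetilde C=C(N,p)\,C_0^{\,p+1-\theta_q}+C(N)\,m_0^{\,2-\theta_q}$ and $\theta_q=\frac{2(q-p-1)}{q-2}$, as claimed.
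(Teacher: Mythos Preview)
Your argument is correct and follows essentially the same route as the paper: show $R_1\le 0$ using $\phi''\le 2$ and $\phi'\le 2r$, bound $R_2$ by interpolating $L^{p+1}$ between $L^2$ and $L^q$ on $\{|x|>R\}$, and bound $R_3$ via $|\Delta^2\phi|\lesssim R^{-2}$. Your radial/tangential recombination for $R_1$ is in fact a cleaner packaging than the paper's case split on the sign of $\phi''/r^2-\phi'/r^3$, but the content is identical.
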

\begin{proof}
We first claim that
\begin{equation}\label{eq2.2:es-R1}
R_1\le 0.
\end{equation}
To prove it, we divide the space $\R^N$ into two parts:
$$\Big\{\frac{\phi''}{r^2}-\frac{\phi'}{r^3}\le 0\Big\}\ \mbox{and}\
\Big\{\frac{\phi''}{r^2}-\frac{\phi'}{r^3}> 0\Big\}.
$$
If $\frac{\phi''}{r^2}-\frac{\phi'}{r^3}\le 0$ it is
obviously true since $\phi'\le 2r$. If
$$
\frac{\phi''}{r^2}-\frac{\phi'}{r^3}\ge 0,
$$
then since $\phi''\le 2$,
\begin{equation*}
\aligned R_1\le &4\int (\phi''-2)|\nabla u|^2\,dx\le 0.
\endaligned
\end{equation*}
So we have proved \eqref{eq2.2:es-R1}. Moreover, since
$$
\textrm{supp}(\phi''+(N-1)\frac{\phi'}{r}-2N)\subset [R,\infty),
$$
by interpolation there exists $0<\theta_q\le 1$, such that
\begin{equation}\label{eq2.2:es-R2}
R_2\le
C\|u\|_{L^q(|x|>R)}^{1-\theta_q}\|u\|_{L^2(|x|>R)}^{\theta_q}\le
CC_0^{1-\theta_q}\|u\|_{L^2(|x|>R)}^{\theta_q},
\end{equation}
where $C>0$, is only dependent on $p,s,N$. Furthermore,
\begin{equation}\label{eq2.2:es-R3}
R_3\le CR^{-2}\|u\|_{L^2(|x|>R)}^2.
\end{equation}
Thus, combining (\ref{eq2.2:eq-I''}) with
(\ref{eq2.2:es-R1})--(\ref{eq2.2:es-R3}), one obtains that for
$R>1$,
$$
I''(t)\le 8Q(u(t))+\widetilde{C}\|u\|_{L^2(|x|>R)}^{\theta_q},
$$
where the constant $\widetilde{C}=\widetilde{C}(s,p,N,m_0,C_0)>0$.
The lemma is now proved.
\end{proof}

\subsubsection{The proof of Theorem \ref{thm:2.1}}

\begin{proof}[Proof of Theorem \ref{thm:2.1}]
Applying (\ref{lem2.1:L2-local}) and Lemma \ref{lem2.2:I''}, one
finds that for any $t\le T\triangleq
{\eta_0R}/(4m_0\overline{C_0})$,
$$
I''(t)\le 8Q(u(t))+\widetilde{C}\big(\eta_0^{\theta_q}+o_R(1)\big).
$$
Integrating from 0 to $T$, and using \eqref{eqs1.1:Q-condition}, one
gets
\begin{equation*}
\aligned I(T)\le &
I(0)+I'(0)T+\int_0^T\int_0^t\Big(8Q(u(t'))+\widetilde{C}\eta_0^{\theta_q}+o_R(1)\Big)\,dt'dt\\
\le &
I(0)+I'(0)T+\Big(8\beta_0+\widetilde{C}\eta_0^{\theta_q}+o_R(1)\Big)\cd\frac{1}{2}T^2.
\endaligned
\end{equation*}
Choosing $\eta_0$ such that
$$\widetilde{C}\eta_0^{\theta_q}=-\beta_0,
$$
and taking $R$ large enough, then for $T=
{\eta_0R}/(4m_0\overline{C_0})$ one has
\begin{equation}\label{eq2.2:Es-IT}
\aligned I(T) \le I(0)+I'(0)
{\eta_0R}/(4m_0\overline{C_0})+\alpha_0R^2,
\endaligned
\end{equation}
where the constant
$$
{\alpha_0}=\beta_0\eta_0^2/(4m_0\overline{C_0})^2<0.
$$
We note here that $\alpha_0$ is independent of $R$.  Now we need the
following two claims:
\begin{equation}\label{eq2.2:Es-I0}
\aligned I(0) =o_R(1)R^2,  \quad I'(0) =o_R(1)R.
\endaligned
\end{equation}
Indeed,
\begin{equation*}
\aligned I(0)\le &
\int_{|x|<\sqrt{R}}|x|^2|u_0(x)|^2\,dx+\int_{\sqrt{R}<|x|<2R}|x|^2|u_0(x)|^2\,dx\\
\le &
Rm_0^2+R^2\int_{|x|>\sqrt{R}}|u_0(x)|^2\,dx\\
= & o_R(1)R^2.
\endaligned
\end{equation*}
A similar argument can be used to obtain the second estimate and
thus proves \eqref{eq2.2:Es-I0}.

Together \eqref{eq2.2:Es-IT}  with \eqref{eq2.2:Es-I0}, and choosing
$R$ large enough, one obtains that for $T=
{\eta_0R}/(4m_0\overline{C_0})$,
\begin{equation*}
\aligned I(T) &\le o_R(1)R^2+{\alpha_0}R^2\\
&\le \frac{1}{2}{\alpha_0}R^2.
\endaligned
\end{equation*}
Since ${\alpha_0}<0$, one finally gets
$$
I(T)<0.
$$
But this is a contradiction with the definition, the proof of
Theorem \ref{thm:2.1} is now completed.
\end{proof}

\subsection{The proof of Theorem \ref{main-thm}}
\label{sec:Proof_mainthm}

\begin{proof}[Proof of Theorem \ref{main-thm}]
From the local theory Proposition \ref{prop:local theory}, we could
define the maximal lifespan $T_{max}$. There are two cases,

\noindent (i) $T_{max}<\infty$. This yields
$$
\lim\limits_{t\to T_{max}}\|u(t)\|_{H^s}=\infty.
$$
Otherwise, there exists a sequence $\{t_n\}_n$ such that $t_n\to
T_{max}$, such that
$$
\sup\limits_{t_n}\|u(t_n)\|_{H^s}<\infty.
$$
Using Proposition \ref{prop:local theory} with the initial data of
$t_n$, we get a contradiction with $T_{max}$ for large $n$.

\noindent (ii) $T_{max}=\infty$. We first observe that
$$
Q(u(t))\le E(u_0)<0, \textrm{ for any } t\in \R.
$$
Thus \eqref{eqs1.1:Q-condition} always holds with $\beta_0=E(u_0)$
under the assumption in this theorem.  Now using Theorem
\ref{thm:2.1}, we prove that there exists a time sequence $\{t_n\}$
such
  that $t_n\to \infty$, and for any $ q>p+1$,
  $$
  \lim\limits_{t_n\uparrow \infty}\|u(t_n)\|_{L^q}=\infty.
  $$
  This concludes Theorem \ref{main-thm}.
\end{proof}

At the end of this section, we give the proof of Corollary
\ref{cor:main-thm}.
\begin{proof}[Proof of Corollary \ref{cor:main-thm}]
From the Galilean transformation,
\begin{equation}\label{eqs1.1:Galilei}
\tilde{u}(t,x) = e^{ix\cdot \xi_0}e^{-it|\xi_0|^2} \, u(t,x-2\xi_0t
).
\end{equation}
If $u$ is the solution of \eqref{EQS-NLS}, then so is $\tilde{u}$.
Moreover, taking $\xi_0=-\frac{P(u_0)}{M(u_0)}$, then
$$
E(u_0)-P(u_0)^2\big/M(u_0)=E(\tilde{u}_0).
$$
Therefore, the conclusion follows by considering $\tilde{u}$
instead.
\end{proof}

\section{The proof of Theorem \ref{thm:main_subciritcal}}

To this end, we shall firstly check that \eqref{sub-condition}
implies \eqref{eqs1.1:Q-condition}, that is, there exists some
strictly negative constant $\beta_0$, such that
\begin{equation*}
\sup\limits_{t\in (-T_-(u_0),T_+(u_0))}Q(u(t))\le \beta_0<0,
\end{equation*}
where $(-T_-(u_0),T_+(u_0))$ is the maximal lifespan. This was
essentially obtained in \cite{HoRo2}, however we also give the proof
here for completeness (with a different argument).

First, we claim that the hypothesis \eqref{sub-condition} implies
that for any $t\in (-T_-(u_0),T_+(u_0))$,
\begin{equation}\label{eqs2.3:sub-condition}
   \aligned
\|u(t)\|_{L^2}^{1-s_c}\|\nabla
u(t)\|_{L^2}^{s_c}>\|Q\|_{L^2}^{1-s_c}\|\nabla Q\|_{L^2}^{s_c}.
   \endaligned
\end{equation}
Indeed, suppose not, then by continuity, there exists $\tilde{t}\in
(-T_-(u_0),T_+(u_0))$, such that
\begin{equation}\label{eqs2.3:sub-argue}
   \aligned
\|u(\tilde{t})\|_{L^2}^{1-s_c}\|\nabla
u(\tilde{t})\|_{L^2}^{s_c}=\|Q\|_{L^2}^{1-s_c}\|\nabla
Q\|_{L^2}^{s_c}.
   \endaligned
\end{equation}
Then by \eqref{eqs2.3:sub-argue} and the sharp Gagliardo-Nirenberg
inequality (see \cite{We83}),
\begin{equation}\label{GN-ineq}
\| u \|^{p+1}_{L^{p+1}} \le C_{\text{GN}} \, \| \nabla u
\|_{L^2}^{\frac{N(p-1)}{2}} \, \| u
\|_{L^2}^{2-\frac{(N-2)(p-1)}{2}},
\end{equation}
where
$$
C_{\text{GN}} = {\| Q \|^{p+1}_{L^{p+1}}}\Big/{\| \nabla Q
\|_{L^2}^{\frac{N(p-1)}{2}} \, \| Q
\|_{L^2}^{2-\frac{(N-2)(p-1)}{2}}},
$$
one obtains that
\begin{equation}\label{eqs2.3:argue-condict}
   \aligned
M(Q)^{\frac{1-s_c}{s_c}}&E(Q)>M(u(\tilde{t}))^{\frac{1-s_c}{s_c}}E(u(\tilde{t}))\\
&= \|u(\tilde{t})\|_{L^2}^{{\frac{2(1-s_c)}{s_c}}}\|\nabla
u(\tilde{t})\|_{L^2}^{2}-\frac2{p+1}\|u(\tilde{t})\|_{L^2}^{{\frac{2(1-s_c)}{s_c}}}\|
u(\tilde{t}) \|^{p+1}_{L^{p+1}}\\
&\ge \|u(\tilde{t})\|_{L^2}^{{\frac{2(1-s_c)}{s_c}}}\|\nabla
u(\tilde{t})\|_{L^2}^{2}\\
&\qquad-\frac2{p+1}C_{\text{GN}}\cd\|u(\tilde{t})\|_{L^2}^{{\frac{2(1-s_c)}{s_c}+2-\frac{(N-2)(p-1)}{2}}}\|\nabla
u(\tilde{t})\|_{L^2}^{\frac{N(p-1)}{2}}\\
&= \|u(\tilde{t})\|_{L^2}^{{\frac{2(1-s_c)}{s_c}}}\|\nabla
u(\tilde{t})\|_{L^2}^{2}-\frac2{p+1}C_{\text{GN}}\cd\Big[\|u(\tilde{t})\|_{L^2}^{{\frac{1-s_c}{s_c}}}\|\nabla
u(\tilde{t})\|_{L^2}\Big]^{\frac{N(p-1)}{2}}\\
&= \|Q\|_{L^2}^{{\frac{2(1-s_c)}{s_c}}}\|\nabla
Q\|_{L^2}^{2}-\frac2{p+1}C_{\text{GN}}\Big[\|Q\|_{L^2}^{{\frac{1-s_c}{s_c}}}\|\nabla
Q\|_{L^2}\Big]^{\frac{N(p-1)}{2}}\\
&=M(Q)^{\frac{1-s_c}{s_c}}E(Q).
   \endaligned
\end{equation}
This gives a contradiction and thus proves
\eqref{eqs2.3:sub-condition}.

By the definition \eqref{eqs:energy-mass} and \eqref{eqs1.1:Q-def},
one has
\begin{equation}\label{eqs:E-Q}
    Q(u(t))=\frac {N(p-1)}{4} E(u(t))-\Big( \frac {N(p-1)}{4}-1\Big)\|\nabla
    u(t)\|_{L^2}^2,
\end{equation}
thus, by \eqref{eqs2.3:sub-condition} and \eqref{sub-condition}, one gives that
$$
Q(u(t))<0,  \textrm{ for any }   t\in (-T_-(u_0),T_+(u_0)).
$$
This together with \eqref{GN-ineq}, and noting that
$\frac{N(p-1)}{2}>2$, yields that there exists some small
$\epsilon_0>0$ such that
\begin{equation}\label{eqs2.3:u-lower-bound}
\|\nabla u(t)\|_{L^2}>\epsilon_0.
\end{equation}
Now we further claim that there exists $\delta_0>0$ such that  for
any $t\in (-T_-(u_0),T_+(u_0))$,
\begin{equation}\label{eqs2.3:Q-upper-bound}
Q(u(t))<-\delta_0\|\nabla u(t)\|_{L^2}^2.
\end{equation}
Indeed, suppose not, there exists a time sequence $\{t_n\}\subset
(-T_-(u_0),T_+(u_0))$ such that
\begin{equation*}
-\delta_n\Big(\frac {N(p-1)}{4}-1\Big)\|\nabla
u(t_n)\|_{L^2}^2<Q(u(t_n))<0,
\end{equation*}
where $\delta_n\to 0$ as $n\to \infty$. Then by \eqref{eqs:E-Q}, one
has
$$
E(u(t_n))>(1-\delta_n)\Big(1-\frac{4}{N(p-1)} \Big)\|\nabla
u(t_n)\|_{L^2}^{2}.
$$
Therefore, by \eqref{eqs2.3:sub-condition}, one finds that
\begin{equation*}
   \aligned
M(u(t_n)&)^{{1-s_c}}E(u(t_n))^{s_c}>(1-\delta_n)^{s_c}M(Q)^{1-s_c}E(Q)^{s_c}.
   \endaligned
\end{equation*}
Thus taking $n\to \infty$ and making use of the mass and energy
conservation laws, we prove that
\begin{equation*}
M(u_0)^{1-s_c}E(u_0)^{s_c}\ge M(Q)^{1-s_c}E(Q)^{s_c}.
\end{equation*}
But this is contradicted with the hypothesis \eqref{sub-condition}
and thus proves \eqref{eqs2.3:Q-upper-bound}. Combining with
\eqref{eqs2.3:u-lower-bound}, we obtain \eqref{eqs1.1:Q-condition}.

Now by Theorem \ref{thm:2.1}, there exists no global solution $u\in
C(\R^+; H^1)$ with \eqref{eqs2.1:050516:53}. Then using Sobolev's
embedding, one may replace $L^q$-norm by $H^1$-norm in
\eqref{eqs2.1:050516:53}, and thus proves Theorem
\ref{thm:main_subciritcal}.

 \hfill$\Box$

\subsection*{Acknowledgements}
The authors would like to express their gratitude to Professor
Changxing Miao for many valuable discussions. The authors also would like to thank Professor
Frank Merle for pointing out the reference \cite{GlMe-95}.

\end{document}